\newtheorem{theorem}{Theorem}[section]
\newtheorem{lemma}[theorem]{Lemma}
\newtheorem{proposition}[theorem]{Proposition}
\theoremstyle{definition}
\theoremstyle{remark}
\numberwithin{equation}{section}
\newcommand{\R}{\ensuremath{\mathbb{R}}}
\newcommand{\N}{\ensuremath{\mathbb{N}}}
\renewcommand{\H}{\mathcal{H}}
\newcommand{\D}{\mathcal{D}}
\begin{document}

\title[Quantitative recurrence properties for self-conformal sets]{Quantitative recurrence properties for self-conformal sets}

\author{Simon Baker and Michael Farmer}
\address{Simon Baker: School of Mathematics, University of Birmingham, Birmingham,  B15 2TT, UK}
\email{simonbaker412@gmail.com}
\address{Michael Farmer: Mathematics institute, University of Warwick, Coventry, CV4 7AL, UK}
\email{michaelfarmer868@gmail.com }

\date{\today}

\subjclass[2010]{}

\begin{abstract}
In this paper we study the quantitative recurrence properties of self-conformal sets $X$ equipped with the map $T:X\to X$ induced by the left shift. In particular, given a function $\varphi:\N\to(0,\infty),$ we study the metric properties of the set $$R(T,\varphi)=\left\{x\in X:|T^nx-x|<\varphi(n)\textrm{ for infinitely many }n\in \N\right\}.$$ Our main result shows that for the natural measure supported on $X$, $R(T,\varphi)$ has zero measure if a natural volume sum converges, and under the open set condition $R(T,\varphi)$ has full measure if this volume sum diverges.

\end{abstract}

\keywords{Quantitative recurrence, Self-conformal sets, Hausdorff measure.}
\maketitle

\section{Introduction}\label{sec:1}
The notion of recurrence is of central importance within Dynamical Systems and Ergodic Theory. A well known theorem due to Poincar\'{e} states that if $(X,\mathcal{B},\mu)$ is a probability space, and $T:X\to X$ is a measure preserving transformation, then for any $E\in \mathcal{B}$ we have $$\mu\left(\{x\in E:T^nx\in E \textrm{ for infinitely many }n\in \N\}\right)=\mu(E).$$ If $X$ is endowed with a metric $d$ so that $(X,d)$ is a separable metric space, and $\mathcal{B}$ is the Borel $\sigma$-algebra, then Poincar\'{e}'s theorem allows us to conclude the following topological statement: 
\begin{equation}
\label{Topological reccurrence}
\liminf_{n\to\infty}d(T^nx,x)=0
\end{equation}for $\mu$-almost every $x\in X$. We call $(X,\mathcal{B},\mu,d,T)$ a metric measure-preserving system or an m.m.p.s. The information provided by \eqref{Topological reccurrence} is qualitative in nature. It tells us nothing about the speed at which an orbit can recur upon its initial point. One of the first general quantitative recurrence results was proved by Boshernitzan in \cite{Bos}. 
\begin{theorem}[\cite{Bos}]
	Let $(X,\mathcal{B},\mu,d,T)$ be a m.m.p.s. Assume that for some $\alpha>0$ the $\alpha$-dimensional Hausdorff measure $\mathcal{H}^{\alpha}$ is $\sigma$-finite on $(X,d)$. Then for $\mu$-almost every $x\in X$ we have $$\liminf_{n\to\infty} n^{1/\alpha}d(T^nx,x)<\infty.$$ Moreover, if $\mathcal{H}^{\alpha}(X)=0$ then for $\mu$-a.e $x\in X$ $$\liminf_{n\to\infty} n^{1/\alpha}d(T^nx,x)=0.$$
\end{theorem}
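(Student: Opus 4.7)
The plan is to combine a covering argument that exploits the $\sigma$-finiteness of $\H^{\alpha}$ with a Kac-style disjointness lemma and a reverse Fatou argument. The driving intuition is that if $x$ lies in a set $F$ of small diameter $\delta$, then the orbit of $x$ must return to $F$ before time of order $1/\mu(F)$, so a careful cover with $\sum_{i} \mathrm{diam}(F_{i})^{\alpha}$ under control should convert Poincar\'e recurrence into the desired quantitative statement.

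By $\sigma$-finiteness I would first write $X = \bigcup_{m} Y_{m}$ with each $\H^{\alpha}(Y_{m}) < \infty$ and $Y_{m}$ increasing to $X$, and reduce to proving the conclusion $\mu$-almost surely on a fixed such $Y = Y_{m}$. Fix $C > 0$; for each scale $k \in \N$ use the definition of $\H^{\alpha}$ to choose a cover $\{F_{i}^{(k)}\}_{i}$ of $Y$ with $\mathrm{diam}(F_{i}^{(k)}) < 1/k$ and $\sum_{i} \mathrm{diam}(F_{i}^{(k)})^{\alpha} \leq \H^{\alpha}(Y) + 1$. For each $i$, set $N_{i}^{(k)} = \lfloor C^{\alpha}/\mathrm{diam}(F_{i}^{(k)})^{\alpha} \rfloor$ and define the \emph{bad set}
\[
B_{i}^{(k)} = \bigl\{ x \in F_{i}^{(k)} : d(T^{n} x, x) > \mathrm{diam}(F_{i}^{(k)}) \text{ for all } 1 \leq n \leq N_{i}^{(k)} \bigr\}.
\]
The central step, which I expect to be the engine of the proof, is a Kac-style disjointness claim: the sets $B_{i}^{(k)}, T^{-1}B_{i}^{(k)}, \ldots, T^{-N_{i}^{(k)}}B_{i}^{(k)}$ are pairwise disjoint. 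Indeed, if $T^{a}y, T^{b}y \in B_{i}^{(k)} \subset F_{i}^{(k)}$ with $0 \leq a < b \leq N_{i}^{(k)}$, then $d\bigl(T^{b-a}(T^{a}y), T^{a}y\bigr) \leq \mathrm{diam}(F_{i}^{(k)})$, contradicting the defining property of $B_{i}^{(k)}$ applied to $T^{a}y$. By $T$-invariance of $\mu$ this gives $(N_{i}^{(k)}+1)\mu(B_{i}^{(k)}) \leq 1$, so $\mu(B_{i}^{(k)}) \leq \mathrm{diam}(F_{i}^{(k)})^{\alpha}/C^{\alpha}$, and summing over $i$,
\[
\mu\Bigl(\bigcup_{i} B_{i}^{(k)}\Bigr) \leq \frac{\H^{\alpha}(Y) + 1}{C^{\alpha}}.
\]

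For any $x \in Y \setminus \bigcup_{i} B_{i}^{(k)}$ I pick an index $i$ with $x \in F_{i}^{(k)} \setminus B_{i}^{(k)}$, obtaining some $n_{k} \leq N_{i}^{(k)}$ with $d(T^{n_{k}}x,x) \leq \mathrm{diam}(F_{i}^{(k)}) \leq 1/k$, hence $n_{k}^{1/\alpha} d(T^{n_{k}}x,x) \leq C$. Reverse Fatou for sets in a probability space then yields $\mu\bigl(\limsup_{k}(Y \setminus \bigcup_{i} B_{i}^{(k)})\bigr) \geq \mu(Y) - (\H^{\alpha}(Y)+1)/C^{\alpha}$. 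For such $x$, either $x$ is periodic (so trivially $\liminf n^{1/\alpha}d(T^{n}x,x) = 0$) or the return times $n_{k}$ are unbounded because $\mathrm{diam}(F_{i}^{(k)}) \to 0$, and a subsequence gives $\liminf_{n} n^{1/\alpha}d(T^{n}x,x) \leq C$. Letting $C \to \infty$ through $\N$ and $m \to \infty$ proves the first assertion. For the moreover clause, when $\H^{\alpha}(X) = 0$ I would take $Y = X$ and refine the cover so that $\sum_{i}\mathrm{diam}(F_{i}^{(k)})^{\alpha} \leq 2^{-k}$; for any fixed $C$ this makes $\sum_{k}\mu(\bigcup_{i}B_{i}^{(k)}) < \infty$, so Borel--Cantelli gives $\liminf_{n} n^{1/\alpha}d(T^{n}x,x) \leq C$ almost surely, and intersecting over $C = 1/\ell$, $\ell \in \N$, yields $\liminf_{n} n^{1/\alpha}d(T^{n}x,x) = 0$ a.e. I anticipate the main subtlety to be the Kac-style disjointness above, together with the bookkeeping around overlapping cover elements and periodic orbits; once the measure estimate on $B_{i}^{(k)}$ is in hand, the rest is a standard limiting argument.
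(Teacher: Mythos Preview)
The paper does not prove this statement; it is quoted as Boshernitzan's theorem with a citation to \cite{Bos} and serves only as background motivation. There is therefore no in-paper proof to compare against.

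That said, your proposal is correct and is essentially Boshernitzan's original argument. The Kac-style disjointness step is exactly the engine of his proof: if $y\in T^{-a}B_i^{(k)}\cap T^{-b}B_i^{(k)}$ with $0\le a<b\le N_i^{(k)}$, then $T^ay,T^by\in F_i^{(k)}$ forces $d(T^{b-a}(T^ay),T^ay)\le \mathrm{diam}(F_i^{(k)})$, contradicting $T^ay\in B_i^{(k)}$. The measure bound $\mu(B_i^{(k)})\le \mathrm{diam}(F_i^{(k)})^\alpha/C^\alpha$ and the summation over the cover then follow as you wrote. Two small points worth tightening in a final write-up: (i) one may always take the cover sets $F_i^{(k)}$ to be Borel (closed, say) without changing the value of $\H^\alpha$, which guarantees measurability of each $B_i^{(k)}$; (ii) the ``reverse Fatou'' you invoke is simply $\mu(\limsup_k A_k)\ge \limsup_k \mu(A_k)$, which is immediate from Fatou applied to the complements. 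The handling of periodic points and the passage $C\to\infty$ (first assertion) and $C\downarrow 0$ via Borel--Cantelli (second assertion) are both fine.
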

Building upon the work of Boshernitzan, Barreira and Saussol in \cite{BarSol} showed how the lower local dimension of a measure can be used to obtain quantitative recurrence results. 

\begin{theorem}[\cite{BarSol}]
If $T:X\to X$ is a Borel measurable map on $X\subset \R^d$, and $\mu$ is a $T$-invariant Borel probability measure on $X,$ then for $\mu$-almost every $x\in X,$ we have 
	$$\liminf_{n\to\infty}n^{1/\alpha}d(T^nx,x)=0\textrm{ for any }\alpha>\liminf_{r\to 0}\frac{\log \mu(B(x,r))}{\log r}.$$
\end{theorem}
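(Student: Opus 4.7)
The plan is to quantify Poincar\'e's recurrence theorem using the local dimension hypothesis. Write $\underline{d}_\mu(x) := \liminf_{r \to 0} \log \mu(B(x,r))/\log r$. Since $\{x : \underline{d}_\mu(x) < \alpha\}$ is the union over rational $\beta < \alpha$ of the sets $\{x : \underline{d}_\mu(x) < \beta\}$, it suffices to fix rational $\beta < \alpha$ and $\epsilon > 0$ and show that
$$A := \bigl\{x \in X : \underline{d}_\mu(x) < \beta \text{ and } n^{1/\alpha} d(T^n x, x) \geq \epsilon \text{ for all but finitely many } n\bigr\}$$
has $\mu(A) = 0$. Each $x \in A$ admits a sequence of radii $r_k(x) \to 0$ with $\mu(B(x, r_k(x))) \geq r_k(x)^{\beta}$, and the idea is to use these ``fat'' scales to force fast recurrence, contradicting the lower bound built into the definition of $A$.

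The core tool is a Kac--Markov estimate: for any Borel $B$ with $\mu(B) > 0$, the first-return time $\tau_B(x) := \inf\{n \geq 1 : T^n x \in B\}$ satisfies $\int_B \tau_B\, d\mu \leq 1$, hence $\mu(\{x \in B : \tau_B(x) > M\}) \leq 1/M$ for every $M > 0$. To apply this uniformly in $x$, at each dyadic scale $r_k = 2^{-k}$ I would fix a maximal $r_k$-separated subset of $\operatorname{supp}(\mu)$ and form the cover $\{B_{k,j} := B(x_{k,j}, r_k)\}$; because the ambient space is $\R^d$ this cover has bounded multiplicity, and the \emph{fat} balls (those with $\mu(B_{k,j}) \gtrsim r_k^{\beta}$) number at most $O(r_k^{-\beta})$ since $\mu(X) = 1$.

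Set $M_k := k^2 r_k^{-\beta}$ and sum the Markov tail bound over the fat balls: the $\mu$-mass of points lying in some fat ball whose return time to that ball exceeds $M_k$ is at most $O(r_k^{-\beta}/M_k) = O(k^{-2})$, which is summable in $k$. The Borel--Cantelli lemma then yields that for $\mu$-a.e. $x$ lying in fat balls at infinitely many scales, the return time to the enclosing fat ball is eventually bounded by $M_k$. For $x \in A$ the witness $r_k(x)$ guarantees that $x$ lies in a fat ball at a dyadic scale $r_k$ comparable to $r_k(x)$, producing integers $n_k \leq M_k$ with $d(T^{n_k} x, x) \leq 4 r_k$. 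Hence
$$n_k^{1/\alpha} d(T^{n_k} x, x) \leq 4 k^{2/\alpha} r_k^{1 - \beta/\alpha} \longrightarrow 0$$
since $\beta < \alpha$, contradicting the definition of $A$ and forcing $\mu(A) = 0$.

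The main obstacle is uniformity in the covering step: a cover of a bounded subset of $\R^d$ by radius-$r_k$ balls has cardinality $O(r_k^{-d})$, which is far too large to sum directly against the $1/M_k$ tail. The restriction to \emph{fat} balls is therefore essential, and one must align the dyadic scale $r_k$ with the witness radius $r_k(x)$ supplied by $\underline{d}_\mu(x) < \beta$, via a careful Vitali-type argument, so that only fat balls are ``seen'' by points $x \in A$. Marrying this combinatorial count to the local-dimension hypothesis is the crux of the Barreira--Saussol argument.
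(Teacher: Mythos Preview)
The paper does not prove this theorem; it is quoted from \cite{BarSol} purely as background motivation, with no argument given. So there is no ``paper's own proof'' to compare your attempt against.

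That said, your sketch is broadly in the spirit of the Barreira--Saussol argument: couple the local-dimension hypothesis with a return-time tail bound coming from Kac's inequality, organised via a multiscale cover and Borel--Cantelli. The ingredients you list (Kac $\Rightarrow$ Markov tail $\mu(\{x\in B:\tau_B(x)>M\})\le 1/M$, counting ``fat'' balls by $O(r_k^{-\beta})$, the choice $M_k=k^2 r_k^{-\beta}$) are the right ones, and your final estimate $n_k^{1/\alpha}d(T^{n_k}x,x)\lesssim k^{2/\alpha}r_k^{1-\beta/\alpha}\to 0$ is correct. Two technical points you flag but do not fully resolve: first, the witness ball $B(x,r_k(x))$ need not sit inside the covering ball $B_{k,j}$ of the \emph{same} radius, so ``fat'' should be defined with a doubled radius (or one works with nested dyadic scales); second, the alignment between the sparse witness scales $r_k(x)$ and the universal dyadic grid requires that the Borel--Cantelli exceptional set be controlled uniformly over all scales, which your formulation does achieve since the bad-set measure $O(k^{-2})$ is summed over all $k$, not just witness scales. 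With those details filled in, the argument goes through.
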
 

A suitable framework for describing recurrence quantitatively is the following. Given $(X,\mathcal{B},\mu,d,T)$ a m.m.p.s. and $\varphi:\N\times X\to (0,\infty)$, let $$R(T,\varphi):=\left\{x\in X:d(T^nx,x)<\varphi(n,x) \textrm{ for infinitely many }n\in \N\right\}.$$ Typically one is interested in determining the metric properties of $R(T,\varphi)$ and relating these properties to $T$ and $\varphi$. This was the line of research pursued by Tan and Wang in \cite{TanWang} where they calculated the Hausdorff dimension of $R(T,\varphi)$ when $T$ is the $\beta$-transformation. Later Seuret and Wang proved similar results for self-conformal sets in \cite{SeuretWang}. As remarked upon by Chang et al in \cite{ChangWuWu}, very few results exist on the Hausdorff measure of $R(T,\varphi)$. In this paper we continue the line of research instigated in \cite{ChangWuWu} and obtain results on the Hausdorff measure of $R(T,\varphi)$ when $X$ is a self-conformal set and $T$ is the natural map induced by the left shift. Before introducing our problem formally, we would like to mention a related topic and include some references.  

The shrinking target problem is concerned with determining the speed at which the orbit of a $\mu$-typical point accumulates on a fixed point $x_0$. The shrinking target problem and the problem of obtaining quantitative recurrence results have many common features. One can define a suitable analogue of the set $R(T,\varphi)$ and ask what are its metric properties. For the shrinking target problem much more is known about the Hausdorff measure of this set, see \cite{CheKle,FMP}, for results on the Hausdorff dimension of this set see \cite{HV,LWWX,WW}.

\subsection{Statement of results}
Let $V\subset \R^d$ be an open set, a $C^{1}$ map $\phi:V\to \R^{d}$ is a conformal mapping if it preserves angles. Equivalently $\phi$ is a conformal mapping if the differential $\phi'$ satisfies $|\phi'(x)y|=|\phi'(x)||y|$ for all $x\in V$ and $y\in \R^{d}$. Let $\Phi=\{\phi_{i}\}_{i\in \D}$ be a finite set of contractions on a compact set $Y\subset \R^d$, i.e. there exists $r\in(0,1)$ such that $|\phi_i(x)-\phi_i(y)|\leq r|x-y|$ for all $x,y\in Y$. We say that $\Phi$ is a conformal iterated function system if each $\phi_i$ can be extended to an injective conformal contraction on some open connected neighbourhood $V$ that contains $Y$ and $0<\inf_{x\in V} |\phi_i'(x)|\leq \sup_{x\in V}|\phi_i'(x)|<1$. Throughout this paper we will assume that the differentials are H\"{o}lder continuous. This means there exists $\alpha>0$ and $c>0$ such that $$||\phi_i'(x)|-|\phi_i'(y)||\leq c|x-y|^\alpha$$ for all $x,y\in V$. A well known result due to Hutchinson \cite{Hut} implies that for any conformal iterated function system there exists a unique non-empty compact set $X\subset \R^d$ such that 
$$X=\bigcup_{i\in \D}\phi_{i}(X).$$ We call the set $X$ the self-conformal set of $\Phi$. 


In what follows, if $I=(i_1,\ldots,i_n)$ then we let $\phi_{I}=\phi_{i_1}\circ \cdots \circ \phi_{i_n},$ $X_{I}=\phi_{I}(X), \|\phi_{I}'\|=\sup_{x\in V}|\phi_{I}'(x)|,$ and $|I|$ will denote the length of $I$. We will refer to the set $X_{I}$ as a cylinder or a cylinder set. For a word $I\in \cup_n\D^n$ we let $I^{\infty}$ denote the element of $\D^{\mathbb{N}}$ obtained by repeating $I$ indefinitely. Similarly for $k\geq 1$ we let $I^k$ denote the word $I$ repeated $k$ times. 

Given a conformal iterated function system we denote by $\gamma$ the unique value satisfying $P(\gamma)=0,$ where $$P(s):=\lim_{n\to\infty}\frac{1}{n}\log \sum_{I\in \D^{n}}\|\phi_{I}'\|^{s}.$$
For a proof of the existence and uniqueness of $\gamma$ see \cite{Fal}. We say that a conformal iterated function system $\Phi$ satisfies the open set condition if there exists an open set $O\subset \mathbb{R}^d$ such that $\phi_{i}(O)\subseteq O$ for all $i\in \D$ and $\phi_{i}(O)\cap \phi_{j}(O)=\emptyset$ for $i\neq j$. Under the open set condition, the Hausdorff dimension of the self-conformal set $X$ is equal to $\gamma$ and $\mathcal{H}^{\gamma}(X)$ is positive and finite. Moreover, under the open set condition $X$ is Ahlfors regular, this means that there exists $C>1$ such that \begin{equation}
\label{Ahlfors regular}\frac{r^{\gamma}}{C}\leq\mathcal{H}^{\gamma}(X\cap B(x,r))\leq Cr^{\gamma}
\end{equation} for all $x\in X$ and $0<r<Diam(X)$. These results are well known and date back to the work of Ruelle \cite{Rue}. For a proof see \cite{Fal}.


One can encode elements of a self-conformal set using sequences in $\D^{\N}$ as follows. Let $\pi:\D^{\N}\to X$ be given by $$\pi((i_m))=\lim_{n\to\infty}(\phi_{i_1}\circ\cdots \circ \phi_{i_n})(0).$$ The map $\pi$ is surjective. Moreover, equipping $\D^{\N}$ with the product topology it can be shown that $\pi$ is continuous. For $x\in X$, we call any sequence $(i_m)\in \D^{\mathbb{N}}$ such that $\pi((i_m))=x$ a coding of $x$. Without any separation hypothesis on the conformal iterated function system, it is possible that a typical $x\in X$ will have multiple, possibly infinitely many, distinct codings. However, assuming the open set condition, $\mathcal{H}^{\gamma}$-almost every $x\in X$ has a unique coding. This follows from Theorem 3.7. from \cite{Kae}. With these observations in mind we now define our map $T:X\to X$ induced by the left shift on $\D^{\mathbb{N}}$. Let 
$Tx=\pi((i_{m+1}))$ where $(i_m)$ is an arbitrary choice of coding for $x$. Since under the open set condition $\mathcal{H}^{\gamma}$-almost every $x\in X$ has a unique coding, it follows from the definition of $T$ that under this assumption $T^{n}x=\pi((i_{m+n}))$ for $\mathcal{H}^{\gamma}$-almost every $x\in X$ for any $n\in \N$. We will only be interested in statements which hold for $\mathcal{H}^{\gamma}$-almost every $x\in X.$ As such when we assume the open set condition we can effectively ignore those points with multiple codings and assume that $T$ maps $x$ to the point whose coding is the unique coding of $x$ with the first digit removed. 

Recalling the definition of $R(T,\varphi)$ from our introduction, and taking $d$ to be the usual Euclidean metric, we may now state our main result.

\begin{theorem}
	\label{Main theorem}
	Let $\Phi$ be a conformal iterated function system and $\varphi:\mathbb{N}\to (0,\infty)$. Then the following statements are true:
\begin{enumerate}
	\item If $\sum_{n=1}^{\infty}\varphi(n)^{\gamma}<\infty$ then $\mathcal{H}^{\gamma}(R(T,\varphi))=0$.
	\item If $\Phi$ satisfies the open set condition and $\sum_{n=1}^{\infty}\varphi(n)^{\gamma}=\infty,$ then $\mathcal{H}^{\gamma}(R(T,\varphi))=\mathcal{H}^{\gamma}(X)$.
\end{enumerate}
\end{theorem}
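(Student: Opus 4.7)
Write $A_n=\{x\in X:|T^{n}x-x|<\varphi(n)\}$, so that $R(T,\varphi)=\limsup_{n}A_n$; the theorem is a Borel--Cantelli dichotomy for this sequence with respect to $\mathcal{H}^{\gamma}|_{X}$. Two tools are used throughout: (i) bounded distortion combined with $P(\gamma)=0$, which pins $\sum_{I\in\mathcal{D}^{n}}\|\phi_I'\|^{\gamma}$ between two positive constants uniformly in $n$, and (ii) $\gamma$-Ahlfors regularity of $\mathcal{H}^{\gamma}|_{X}$ under the OSC.

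For part (1), decompose $A_n=\bigcup_{I\in\mathcal{D}^{n}}A_n\cap X_I$. For $x\in A_n\cap X_I$, write $x=\phi_{I}(y)$ with $y=T^{n}x\in X$, so the condition reads $|y-\phi_{I}(y)|<\varphi(n)$. The contraction $\phi_I$ has a unique fixed point $z_I$, and $z_I\in X$ since $\phi_I(X)\subseteq X$ and $X$ is compact. As $\|\phi_I'\|$ decays exponentially in $n$, the triangle inequality gives $|y-z_I|\leq 2\varphi(n)$ once $n$ is large. Hence $A_n\cap X_I\subseteq\phi_I(B(z_I,2\varphi(n))\cap X)$, a set of diameter at most $4\|\phi_I'\|\varphi(n)$. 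The union over $I\in\mathcal{D}^n$ is a cover of $A_n$ of total $\gamma$-content $\lesssim\varphi(n)^{\gamma}$, and summing over $n\geq N$ and letting $N\to\infty$ closes this direction.

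For part (2) I would first reverse the above argument to obtain a matching lower bound. Any $y\in X$ with $|y-z_I|<\tfrac{1}{2}\varphi(n)$ already satisfies $|y-\phi_I(y)|<\varphi(n)$; since $z_I\in X$, Ahlfors regularity gives $\mathcal{H}^{\gamma}(B(z_I,\tfrac{1}{2}\varphi(n))\cap X)\gtrsim\varphi(n)^{\gamma}$, and pushing forward by $\phi_I$ via bounded distortion yields $\mathcal{H}^{\gamma}(A_n\cap X_I)\gtrsim\|\phi_I'\|^{\gamma}\varphi(n)^{\gamma}$. Summing over the essentially disjoint cylinders $X_I$ produces $\mathcal{H}^{\gamma}(A_n)\gtrsim\varphi(n)^{\gamma}$, so $\sum_{n}\mathcal{H}^{\gamma}(A_n)=\infty$. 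Next I would control correlations: for $m<n$ and $J=I'I''\in\mathcal{D}^{n}$ with $|I'|=m$, a point in $A_m\cap A_n\cap X_J$ forces $T^{n}x$ to be simultaneously near the fixed point of $\phi_J$ (at scale $\varphi(n)$) and to satisfy an $m$-scale localisation through $\phi_{I''}$. Bounded distortion decouples these to give $\mathcal{H}^{\gamma}(A_m\cap A_n)\lesssim\varphi(m)^{\gamma}\varphi(n)^{\gamma}$, and the Chung--Erd\H{o}s inequality then yields $\mathcal{H}^{\gamma}(\limsup A_n)\geq c>0$.

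To upgrade positive measure to full measure, I would apply the entire argument inside an arbitrary cylinder $X_J$ (conjugating the IFS through $\phi_J$), obtaining $\mathcal{H}^{\gamma}(R(T,\varphi)\cap X_J)\geq c\,\mathcal{H}^{\gamma}(X_J)$ with $c$ independent of $J$. A standard density-point argument based on Ahlfors regularity then rules out any positive-measure complement. The main obstacle is the overlap estimate: the two recurrence constraints concern the same point on both sides of an inequality, so disentangling them requires careful bookkeeping of how the digit blocks of length $m$ and $n$ interact under $\phi_I$-pullback, and this is the step in which the full conformal structure (via bounded distortion at every scale) is used most heavily.
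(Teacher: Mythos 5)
Your part (1) is correct and is essentially the paper's argument: both localise $A_n=\{x:|T^nx-x|<\varphi(n)\}$ near the fixed points $z_I=\pi(I^{\infty})$ at scale $\|\phi_I'\|\varphi(n)$ and sum using $\sum_{I\in\mathcal{D}^n}\|\phi_I'\|^{\gamma}\asymp 1$. The problem is in part (2), specifically the correlation estimate $\mathcal{H}^{\gamma}(A_m\cap A_n)\lesssim\varphi(m)^{\gamma}\varphi(n)^{\gamma}$, which is \emph{false} in general. The obstruction is not bookkeeping but the points near fixed points of short periodic words, which recur at many times simultaneously. Take the middle-thirds Cantor set, $z=0$ the fixed point of $\phi_1$, and $\gamma=\log2/\log3$. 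For $x\in X$ with $x<3^{-m}$ one has $T^mx=3^mx$, so $x\in A_m$ whenever $x<3^{-m}\varphi(m)$, and hence
\begin{equation*}
\mathcal{H}^{\gamma}(A_m\cap A_n)\;\gtrsim\;\min\bigl(3^{-m}\varphi(m),\,3^{-n}\varphi(n)\bigr)^{\gamma}.
\end{equation*}
Now choose a sparse set of indices with $\varphi(m)=3^{-3m}$ and $\varphi(m+1)=3^{-2m}$ (and $\varphi(n)=n^{-1/\gamma}$ elsewhere, so the volume sum still diverges). Then the left-hand side is $\gtrsim 3^{-4m\gamma}$ while $\varphi(m)^{\gamma}\varphi(m+1)^{\gamma}=3^{-5m\gamma}$, so your claimed bound fails by a factor that blows up exponentially. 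This is exactly why the paper does not work with the full sets $A_n$: it replaces them by unions $E_n$ of carefully chosen cylinders $\widetilde{X}_{I^{k_I+1}(i_1,\ldots,i_{s_I})}$ (pairwise of null intersection, which also makes the combinatorics of $E_n\cap E_m$ tractable, unlike the overlapping balls covering $A_n$), and its key Lemma on near-independence produces the correct estimate, which carries an \emph{extra} term of order $\kappa^{|m-n|}\varphi(\max(m,n))^{\gamma}$ coming precisely from the nested/overlapping periodic-block case. That term is not an artefact: it is unavoidable, but it is summable in $|m-n|$, so its total contribution is $O(\sum\varphi(n)^{\gamma})$, which is dominated by $(\sum\varphi(n)^{\gamma})^{2}$ in the divergence regime, and Chung--Erd\H{o}s still applies.

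Two smaller points. First, your lower bound $\mathcal{H}^{\gamma}(A_n)\gtrsim\varphi(n)^{\gamma}$ is fine, but by itself it does not feed into Chung--Erd\H{o}s unless the pairwise estimates are repaired as above. Second, ``conjugating the IFS through $\phi_J$'' does not literally localise the problem, since $T^nx$ for $x\in X_J$ leaves $X_J$; the paper instead restricts the defining union of $E_n$ to words with prefix $J$ and verifies $\sum_n\mu(E_n)\asymp\mu(X_J)\sum_n\varphi(n)^{\gamma}$ directly, before invoking the Beresnevich--Dickinson--Velani density lemma. Your overall architecture (positive proportion in every ball, then a density argument) matches the paper's, but as written the central quasi-independence step is wrong and needs the additional exponentially decaying correction term to be both stated and proved.
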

Theorem \ref{Main theorem} was proved in \cite{ChangWuWu} by Chang et al for homogeneous self-similar sets in $\R$ satisfying the strong separation condition (i.e. $\phi_{i}(X)\cap \phi_{j}(X)=\emptyset$ for all $i\neq j$). As such Theorem \ref{Main theorem} significantly improves upon \cite{ChangWuWu} as it allows for a more general class of iterated function systems and has weaker separation hypothesis. \\

\noindent \textbf{Notation.} Given two positive real valued functions $f$ and $g$ defined on some set $S,$ we write $f\preceq g$ if there exists a positive constant $C$ such that $f(x)\leq C g(x)$ for all $x\in S$. Similarly we write $f\succeq g$ if $g\preceq f$. We write $f\asymp g$ if  $f\preceq g$ and $f\succeq g.$

\section{Proof of Theorem \ref{Main theorem}}
Before diving into our proof it is useful to recall some well known properties of self-conformal sets. We start by stating some properties that hold without any separation assumptions on our conformal iterated function system. These will be used in our proof of the convergence part of Theorem \ref{Main theorem}.

The following properties hold for any conformal iterated function system:
\begin{itemize}
	\item For any $n\in \N$ we have 
	\begin{equation}
	\label{Sum of derivatives}
	\sum_{I\in \D^n}\|\phi_{I}'\|^{\gamma}\asymp 1.
	\end{equation}
	\item Let $I\in \cup_n \D^n$. Then for any $x,y\in V$ we have 
	\begin{equation}
	\label{distance distortion}
	|\phi_{I}(x)-\phi_{I}(y)|\asymp \|\phi_{I}'\||x-y|.
	\end{equation}
\item Let $x\in X$ and $(i_m)\in \D^{\N}$ be a coding of $x$. For any $0<r<Diam(X)$ there exists $N\in\mathbb{N}$ such that \begin{equation}
	\label{cylinder approx}
	X_{i_{1},\ldots,i_{N}}\subset B(x,r) \textrm{ and } Diam(X_{i_{1},\ldots,i_{N}})\asymp r.
	\end{equation}
\end{itemize} Equation \eqref{Sum of derivatives} is essentially a consequence of the fact that for each $I\in \cup_n \D^n,$ the quantity $\|\phi_{I}'\|^{\gamma}$ is comparable to the mass a suitably defined Gibbs probability measure on $\D^{\N}$ assigns to the cylinder corresponding to the word $I$ (see \cite{Fal}).  For a proof of \eqref{distance distortion} see Lemma 6.1 from \cite{AKT}. The proof of \eqref{cylinder approx} is standard.

Now suppose $\Phi$ is a conformal iterated function system satisfying the open set condition. For any $I\in \cup_n \D^n$ we let 
$$\widetilde{X_{I}}:=\{x\in X_I: x \textrm{ has a unique coding }\}.$$ Since $\mathcal{H}^{\gamma}$-almost every $x\in X$ has a unique coding, we have
\begin{equation}
	\label{Samemeasure}
	\mathcal{H}^{\gamma}(X_I)=\mathcal{H}^{\gamma}(\widetilde{X}_I)
\end{equation}for any $I\in \cup_n\D^n$.
Let $\mu:=\mathcal{H}^{\gamma}|_{X}$ be the $\gamma$-dimensional Hausdorff measure restricted to $X.$ The properties stated below are well known for cylinders without the unique coding restriction. These properties still hold for the sets $\widetilde{X_{I}}$ because of \eqref{Samemeasure}. 

The following properties hold:
\begin{itemize}
	\item For any $n\in\mathbb{N}$ and $I,J\in \D^{n}$ such that $I\neq J,$ we have
	\begin{equation}
		\label{measure zero intersection}
		\mu(\widetilde{X}_{I}\cap \widetilde{X}_{J})=0.
	\end{equation}
	\item For any $I,J\in \cup_n\D^n$
	\begin{equation}
		\label{Weak Bernoulli measure}
		\mu(\widetilde{X}_{IJ})\asymp \mu(\widetilde{X}_{I})\mu(\widetilde{X}_{J}).
	\end{equation}
	\item For any $I\in \cup_n\D^n$
	\begin{equation}
		\label{Measure and diameter}
		\mu(\widetilde{X}_{I})\asymp Diam(X_{I})^{\gamma}.
	\end{equation}
	\item There exists $\kappa\in(0,1)$ such that for any $I\in \cup_n \D^n$
	\begin{equation}
		\label{measure decay}
		\mu(\widetilde{X}_{I})\preceq \kappa^{|I|}.
	\end{equation}
		\item For any $n\in \N$ we have 
	\begin{equation}
		\label{Sum prefixes}
		\sum_{I\in \D^n}\mu(\widetilde{X}_I)=\mu(X).
	\end{equation}
	Similarly, for any $J\in \cup_{n}\D^n$ and $n\geq |J|$ we have \begin{equation}
		\label{Sum prefixes2}\sum_{\stackrel{I\in \D^n}{J\textrm{ is a prefix of }I}}\mu(\widetilde{X}_{I})=\mu(\widetilde{X}_J).
	\end{equation}
	
		\end{itemize}
In the above we have denoted the concatenation of two words $I$ and $J$ by $IJ$. Property \eqref{measure zero intersection} follows from Theorem 3.7. from \cite{Kae}. For a proof of the remaining properties see \cite{Fal} and \cite{Rue}. Properties \eqref{Weak Bernoulli measure}, \eqref{Measure and diameter}, and \eqref{measure decay} are essentially a consequence of the fact that under the open set condition $\mu$ is equivalent to the pushforward of the aforementioned Gibbs probability measure defined on $\D^{\N}$. Properties \eqref{Sum prefixes} and \eqref{Sum prefixes2} are a consequence of \eqref{measure zero intersection} and the fact $X=\cup_{I\in \D^n}\phi_{I}(X)$ for any $n\in \N$.
\subsection{Proof of Theorem \ref{Main theorem}.1. (Convergence part)}
The proof of the convergence part of Theorem \ref{Main theorem} will be a consequence of the following lemma.


\begin{lemma}
	\label{covering lemma}
There exists $K>0$ such that for $n$ sufficiently large 
$$\{x\in X:|T^nx-x|<\varphi(n)\}\subseteq \bigcup_{I\in \D^n}B(\pi(I^{\infty}),K\|\phi_{I}'\|\varphi(n)).$$
\end{lemma}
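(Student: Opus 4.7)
The plan is to take any $x\in X$ with $|T^n x - x|<\varphi(n)$ and identify a specific word $I\in\D^n$ such that $x$ lies in the required ball around $\pi(I^\infty)$. First, I would fix a coding $(i_m)_{m\geq 1}$ of $x$ (the one used to define $T^n x$) and set $I=(i_1,\ldots,i_n)\in\D^n$. By construction of $T$, we have $x=\phi_I(T^n x)$. The crucial observation is that $\pi(I^\infty)$ is the unique fixed point of the contraction $\phi_I$: writing out $\pi(I^\infty)=\lim_{k\to\infty}\phi_I^k(0)$ identifies the limit with the Banach fixed point of $\phi_I$. Call this fixed point $p_I$, so $\phi_I(p_I)=p_I$.

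The main step is then a short distortion-plus-triangle-inequality computation. Applying the distance distortion estimate \eqref{distance distortion} to the pair $T^n x, p_I$, both of which lie in $X\subseteq V$, yields
$$|x-p_I|=|\phi_I(T^n x)-\phi_I(p_I)|\leq c\|\phi_I'\|\,|T^n x-p_I|$$
for a constant $c>0$ that is uniform in $I$. Combining this with the triangle inequality $|T^n x-p_I|\leq |T^n x-x|+|x-p_I|<\varphi(n)+|x-p_I|$ gives
$$(1-c\|\phi_I'\|)\,|x-p_I|\leq c\|\phi_I'\|\,\varphi(n).$$
Since each $\phi_i$ is a strict contraction on $V$ with $\sup_{x\in V}|\phi_i'(x)|<1$, the chain rule produces a uniform $r\in(0,1)$ with $\|\phi_I'\|\leq r^n$ for every $I\in\D^n$. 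Hence for $n$ sufficiently large $c\|\phi_I'\|\leq 1/2$ uniformly over $I\in\D^n$, and rearranging gives $|x-p_I|\leq 2c\,\|\phi_I'\|\,\varphi(n)$. Taking $K:=2c$ then shows $x\in B(\pi(I^\infty),K\|\phi_I'\|\varphi(n))$, which places $x$ in the claimed union.

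There is no serious obstacle here: the entire substance of the lemma is the identification of $\pi(I^\infty)$ as the fixed point of $\phi_I$, together with the conformal bounded-distortion estimate \eqref{distance distortion}. The only point requiring any care is checking that the implicit constant in \eqref{distance distortion} is independent of $I$, which is guaranteed by the statement of that estimate as quoted in the excerpt; this uniformity is what lets the argument close for \emph{all} $I\in\D^n$ simultaneously once $n$ is taken large.
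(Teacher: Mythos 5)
Your proof is correct and follows essentially the same route as the paper: identify $I\in\D^n$ with $x=\phi_I(T^nx)$, note that $\pi(I^\infty)$ is the fixed point of $\phi_I$, and combine the triangle inequality with the uniform distortion estimate \eqref{distance distortion} before rearranging (the paper phrases the same computation in terms of $\phi_I^{-1}$ expanding distances rather than $\phi_I$ contracting them, which is only a cosmetic difference). The points you flag as needing care --- uniformity of the distortion constant in $I$ and the uniform bound $\|\phi_I'\|\leq r^n$ making the coefficient positive for large $n$ --- are exactly the points the paper's proof relies on as well.
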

\begin{proof}
Let us fix $x$ such that $|T^nx-x|<\varphi(n).$ Let $I\in \D^n$ be such that $T^{n}(x)=\phi_{I}^{-1}(x).$ Such an $I$ exists by the definition of $T$ and the coding map $\pi$. Using the fact $\pi(I^{\infty})$ is fixed under $\phi_{I}^{-1}$ together with the triangle inequality we have 
$$|x-\pi(I^{\infty})|\geq |T^n(x)-\pi(I^{\infty})|-|T^n(x)-x|>|\phi_{I}^{-1}(x)-\phi_{I}^{-1}(\pi(I^{\infty}))|-\varphi(n).$$ Applying \eqref{distance distortion} it follows that there exists $K'>0$ such that 
$$|x-\pi(I^{\infty})|> K'\|\phi_{I}'\|^{-1}|x-\pi(I^{\infty})|-\varphi(n).$$ For $n$ sufficiently large $K'\|\phi_{I}'\|^{-1}-1$ is positive and therefore this expression can be rearranged to give 
\begin{equation}
\label{easy}
|x-\pi(I^{\infty})|<\frac{\varphi(n)}{K'\|\phi_{I}'\|^{-1}-1}.
\end{equation} Since each $\phi_i$ is strictly contracting, it follows that $K'\|\phi_{I}'\|^{-1}-1\asymp \|\phi_{I}'\|^{-1}$ for $n$ sufficiently large. This fact together with \eqref{easy} implies our result.
\end{proof}

\begin{proof}[Proof of Theorem \ref{Main theorem}.1.]
Assume $\varphi$ is such that $\sum_{n=1}^{\infty}\varphi(n)^{\gamma}<\infty.$ By Lemma \ref{covering lemma} and the definition of Hausdorff measure (see \cite{Fal2}) we have the following: 
\begin{align*}
\H^{\gamma}(R(T,\varphi))&\leq \liminf_{N\to\infty}\sum_{n=N}^{\infty}\sum_{I\in\D^n}Diam(B(\pi(I^{\infty}), K\|\phi_{I}'\|\varphi(n))^{\gamma}\\
&\preceq \liminf_{N\to\infty}\sum_{n=N}^{\infty}\sum_{I\in\D^n}(\|\phi_{I}'\|\varphi(n)))^\gamma\\
&=\liminf_{N\to\infty}\sum_{n=N}^{\infty}\varphi(n)^\gamma\sum_{I\in\D^n}\|\phi_{I}'\|^{\gamma}\\
&\stackrel{\eqref{Sum of derivatives}}{\preceq}\liminf_{N\to\infty}\sum_{n=N}^{\infty}\varphi(n)^\gamma\\
&=0.
\end{align*} In the last line we used our assumption $\sum_{n=1}^{\infty}\varphi(n)^\gamma<\infty.$ 
\end{proof}
\subsection{Proof of Theorem \ref{Main theorem}.2. (Divergence part)}
Our proof of the divergence part of Theorem \ref{Main theorem} is based upon the proof of Theorem 1.4. from \cite{Bak}. We will make use of the following two lemmas.

\begin{lemma}
	\label{Density lemma}
	Let $X$ be a compact set in $\R^{d}$ and let $\mu$ be a finite doubling measure on X such
	that any open set is $\mu$-measurable. Let $E$ be a Borel subset of $X$. Assume that there are
	constants $r_0,c > 0$ such that for any ball $B$ with radius less than $r_0$ and centre in $X$ we have
	$$\mu(E \cap B) > c \mu(B).$$
	Then $\mu(X \setminus E) = 0.$
\end{lemma}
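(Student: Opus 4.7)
The plan is to proceed by contradiction. Suppose $\mu(X\setminus E)>0$. Since $E$ is Borel and open sets are $\mu$-measurable, $X\setminus E$ is $\mu$-measurable with positive $\mu$-mass, and the idea is to locate a Lebesgue density point of $X\setminus E$ lying inside $X\setminus E$ itself, and then to see that the density hypothesis forces the density there to be strictly less than $1$.

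The main tool is the Lebesgue differentiation theorem for doubling measures on $\mathbb{R}^d$: because $\mu$ is a finite doubling Borel measure, for any $\mu$-measurable set $A$ one has
$$\lim_{r\to 0}\frac{\mu(A\cap B(x,r))}{\mu(B(x,r))}=1$$
for $\mu$-a.e. $x\in A$. Applying this with $A=X\setminus E$ produces a point $x\in X\setminus E\subseteq X$ at which the above density equals $1$. On the other hand, since $x\in X$ and $\mu$ is concentrated on $X$, the decomposition $\mu(B(x,r))=\mu(E\cap B(x,r))+\mu((X\setminus E)\cap B(x,r))$ combined with the hypothesis $\mu(E\cap B(x,r))>c\,\mu(B(x,r))$ (valid for all $r<r_0$) gives
$$\frac{\mu((X\setminus E)\cap B(x,r))}{\mu(B(x,r))}<1-c$$
for every such $r$. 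Letting $r\to 0$ yields a density at $x$ bounded above by $1-c<1$, contradicting the previous display. Hence $\mu(X\setminus E)=0$.

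The only non-trivial ingredient is the Lebesgue differentiation theorem, whose validity for a finite doubling Borel measure on Euclidean space is classical, typically proved via the Besicovitch covering theorem applied to the maximal function of $\mathbf{1}_A$. The remaining bookkeeping (measurability of $X\setminus E$, the partition of $\mu(B(x,r))$) is immediate from the hypotheses. I therefore do not anticipate any substantive obstacle; an equivalent proof can also be given by covering $X\setminus E$ with balls of radius $<r_0$ and applying a Vitali-type argument directly, but the density-point approach is the cleanest.
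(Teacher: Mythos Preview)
Your argument is correct. The paper does not actually supply its own proof of this lemma; it simply refers the reader to \cite[\S 8]{BDV}. Your approach via the Lebesgue differentiation theorem for doubling (Radon) measures on $\mathbb{R}^d$ is exactly the standard route, and the alternative Vitali-covering formulation you mention is essentially what appears in the cited reference, so there is nothing to distinguish here.
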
For a proof of Lemma \ref{Density lemma} see \cite[\S 8]{BDV}. Note that a measure $\mu$ supported on a compact set $X$ is doubling if there exists a constant $C>1$ such that for any $x\in X$ and $r>0$ we have $$\mu(B(x,2r))\leq C \mu(B(x,r)).$$ Since $X$ is Ahlfors regular it follows from \eqref{Ahlfors regular} that $\mu$ is automatically a doubling measure.
\begin{lemma}
\label{Erdos lemma}
Let $X$ be a compact set in $\R^{d}$ and let $\mu$ be a finite measure on $X$. Also, let $E_n$ be
a sequence of $\mu$-measurable sets such that $\sum_{n=1}^{\infty}\mu(E_n)=\infty.$ Then
$$\mu(\limsup_{n\to\infty} E_{n})\geq \limsup_{Q\to\infty}\frac{(\sum_{n=1}^{Q}\mu(E_{n}))^{2}}{\sum_{n,m=1}^{Q}\mu(E_{n}\cap E_m)}.$$
\end{lemma}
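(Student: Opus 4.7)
The plan is to prove the inequality by a Cauchy--Schwarz argument applied to partial sums of indicator functions, followed by a truncation step that exploits the divergence $\sum_n \mu(E_n)=\infty$ to pass from finite unions to $\limsup_n E_n$.

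For the Cauchy--Schwarz step, fix integers $1\leq N\leq Q$ and set $f_{N,Q}=\sum_{n=N}^{Q}\mathbf{1}_{E_n}$ and $U_{N,Q}=\bigcup_{n=N}^{Q}E_n$. Expanding directly, $\int f_{N,Q}\,d\mu=\sum_{n=N}^{Q}\mu(E_n)$ and $\int f_{N,Q}^{2}\,d\mu=\sum_{n,m=N}^{Q}\mu(E_n\cap E_m)$. Since $f_{N,Q}$ is supported on $U_{N,Q}$, the Cauchy--Schwarz inequality applied to $f_{N,Q}$ and $\mathbf{1}_{U_{N,Q}}$ gives
$$\left(\sum_{n=N}^{Q}\mu(E_n)\right)^{2}=\left(\int f_{N,Q}\mathbf{1}_{U_{N,Q}}\,d\mu\right)^{2}\leq \mu(U_{N,Q})\sum_{n,m=N}^{Q}\mu(E_n\cap E_m),$$
so that
$$\mu(U_{N,Q})\geq \frac{\bigl(\sum_{n=N}^{Q}\mu(E_n)\bigr)^{2}}{\sum_{n,m=N}^{Q}\mu(E_n\cap E_m)}.$$

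To upgrade this to a bound on $\limsup_n E_n$, let $A_N=\bigcup_{n\geq N}E_n$, so that $\limsup_n E_n=\bigcap_{N}A_N$. Because $\mu$ is finite and the sets $A_N$ are decreasing, continuity from above gives $\mu(\limsup_n E_n)=\lim_{N\to\infty}\mu(A_N)$. Since $U_{N,Q}\subseteq A_N$ for every $Q\geq N$, the previous display yields a bound on $\mu(A_N)$ in terms of sums that start at $N$. To replace $N$ by $1$ on the right-hand side, set $C_N=\sum_{n=1}^{N-1}\mu(E_n)$, which is finite since $\mu$ is finite, and note that $\sum_{n=N}^{Q}\mu(E_n)=\sum_{n=1}^{Q}\mu(E_n)-C_N$ while $\sum_{n,m=N}^{Q}\mu(E_n\cap E_m)\leq \sum_{n,m=1}^{Q}\mu(E_n\cap E_m)$. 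The divergence hypothesis forces $\sum_{n=1}^{Q}\mu(E_n)\to\infty$, so along any subsequence $Q_k\to\infty$ realising the $\limsup$ on the right-hand side of the claim we have $\bigl(\sum_{n=1}^{Q_k}\mu(E_n)-C_N\bigr)^{2}/\bigl(\sum_{n=1}^{Q_k}\mu(E_n)\bigr)^{2}\to 1$. Combining these observations yields
$$\mu(A_N)\geq \limsup_{Q\to\infty}\frac{\bigl(\sum_{n=1}^{Q}\mu(E_n)\bigr)^{2}}{\sum_{n,m=1}^{Q}\mu(E_n\cap E_m)},$$
and since the right-hand side does not depend on $N$, letting $N\to\infty$ completes the proof.

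The main obstacle is precisely the truncation in the second step. The Cauchy--Schwarz inequality directly controls only $\mu(U_{1,Q})$, the measure of a single finite union, whereas the conclusion concerns $\mu(\limsup_n E_n)$. Shifting the lower summation index to $N$ costs a fixed additive constant $C_N$ in the numerator, and the whole role of the hypothesis $\sum_n\mu(E_n)=\infty$ is to ensure that this cost is asymptotically negligible when one passes to the $\limsup$ in $Q$. The remainder of the argument is a routine application of monotone set-theoretic identities and the finiteness of $\mu$.
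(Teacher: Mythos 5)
Your proof is correct: the Cauchy--Schwarz step yields the Chung--Erd\H{o}s lower bound for $\mu(U_{N,Q})$, and the truncation argument using the divergence of $\sum_n\mu(E_n)$ to absorb the constant $C_N$ is handled properly. The paper itself gives no proof of this lemma, citing only Sprind\v{z}uk \cite{Spr}; your argument is essentially the standard proof of that cited result (the Kochen--Stone refinement of the Borel--Cantelli lemma), so there is nothing further to compare.
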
For a proof of Lemma \ref{Erdos lemma} see \cite[Lemma 5]{Spr}.

We may now proceed with our proof of the divergence part of Theorem \ref{Main theorem}. Let $I=(i_1,\ldots,i_n)\in\D^n$ and consider the ball $B(\pi(I^{\infty}),\varphi(n)/2)$. Recall that given a finite word $I$ we let $I^{\infty}$ denotes the element of $\D^{\N}$ obtained by repeating $I$ indefinitely, and $I^{k}$ denotes the word $I$ repeated $k$ times. Applying \eqref{cylinder approx} we know that there exists $k_{I}\geq 0$ and $1\leq s_{I}\leq n-1$ such that $$X_{I^{k_I}(i_1,\ldots,i_{s_I})}\subseteq B(\pi(I^{\infty}),\varphi(n)/2)$$ and 
\begin{equation}
\label{inclusion}
Diam(X_{I^{k_I}(i_1,\ldots,i_{s_I})})\asymp \frac{\varphi(n)}{2}.
\end{equation} Now consider the set $\widetilde{X}_{I^{k_I+1}(i_1,\ldots,i_{s_I})}.$ For any $x\in \widetilde{X}_{I^{k_I+1}(i_1,\ldots,i_{s_I})}$ we have $T^nx\in \widetilde{X}_{I^{k_I}(i_1,\ldots,i_{s_I})}.$ Moreover, since $$\widetilde{X}_{I^{k_I+1}(i_1,\ldots,i_{s_I})}\subseteq X_{I^{k_I}(i_1,\ldots,i_{s_I})}\subseteq  B(\pi(I^{\infty}),\varphi(n)/2),$$ we may conclude by the triangle inequality that if $x\in \widetilde{X}_{I^{k_I+1}(i_1,\ldots,i_{s_I})}$ then $|T^nx-x|<\varphi(n)$. So if we let $$E_n'=\bigcup_{I\in \D^n}\widetilde{X}_{I^{k_I+1}(i_1,\ldots,i_{s_I})}$$ then $$\limsup_{n\to\infty} E_n'\subseteq R(T,\varphi).$$ To prove the divergence part of Theorem \ref{Main theorem} it suffices to show that $\mu(\limsup_{n\to\infty} E_n')=\mu(X).$ To do this we will apply Lemma \ref{Density lemma}. As such let us fix an arbitrary ball $B$ with centre in $X$ and radius less then $Diam(X)$. Applying \eqref{cylinder approx} we know that there exists $J\in \cup_n D^n$ such that $X_{J}\subseteq B$ and $Diam(X_J)\asymp Radius(B)$. By \eqref{Ahlfors regular} and \eqref{Measure and diameter} we know that $\mu(X_J)\asymp \mu(B).$ Therefore to prove the divergence part of Theorem \ref{Main theorem}, instead of proving that there exists $c>0$ such that $\mu(\limsup_{n\to\infty} E_n' \cap B)>c\mu(B)$, it suffices to show that there exists $c>0$ such that $\mu(\limsup_{n\to\infty} E_n' \cap X_J)>c\mu(X_J).$ 

For $n\geq |J|$ if we let $$E_n=\bigcup_{\stackrel{I\in \D^n}{J\textrm{ is a prefix of }I}}\widetilde{X}_{I^{k_I+1}(i_1,\ldots,i_{s_I})}$$ then $\limsup_{n\to\infty} E_n\subset \limsup_{n\to\infty} E_n'\cap X_J$. Therefore to prove the divergence part of Theorem \ref{Main theorem} it suffices to show that there exists $c>0$ such that 
\begin{equation}
\label{Sufficestoshow}
\limsup_{n\to\infty} E_n>c\mu(X_J).
\end{equation}
To do this we will use Lemma \ref{Erdos lemma}. To use this lemma we first have to check $\sum_{n=|J|}^{\infty}\mu(E_n)=\infty.$

\begin{lemma}
	\label{Divergence lemma}
	We have $\sum_{n=|J|}^{Q}\mu(E_n)\asymp \mu(X_J)\sum_{n=|J|}^{Q}\varphi(n)^{\gamma}.$
\end{lemma}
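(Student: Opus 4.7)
The plan is to bound $\mu(E_n)$ for each fixed $n$ and then sum the bounds over $n \in \{|J|,\ldots,Q\}$. First, I would observe that the sets appearing in the union defining $E_n$ are almost disjoint: each $\widetilde{X}_{I^{k_I+1}(i_1,\ldots,i_{s_I})}$ is contained in $\widetilde{X}_{I}$, and by \eqref{measure zero intersection} the cylinders $\{\widetilde{X}_{I}\}_{I\in\D^n}$ have pairwise $\mu$-null intersection. Consequently
\[
\mu(E_n) \;=\; \sum_{\substack{I \in \D^n\\ J \text{ is a prefix of } I}} \mu\bigl(\widetilde{X}_{I^{k_I+1}(i_1,\ldots,i_{s_I})}\bigr).
\]

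Next I would estimate each summand using the conformal geometry. Property \eqref{Measure and diameter} gives $\mu(\widetilde{X}_{I^{k_I+1}(i_1,\ldots,i_{s_I})}) \asymp Diam(X_{I^{k_I+1}(i_1,\ldots,i_{s_I})})^\gamma$, and since $X_{I^{k_I+1}(i_1,\ldots,i_{s_I})} = \phi_{I}(X_{I^{k_I}(i_1,\ldots,i_{s_I})})$, applying \eqref{distance distortion} to a pair of diameter-realising points yields
\[
Diam(X_{I^{k_I+1}(i_1,\ldots,i_{s_I})}) \;\asymp\; \|\phi_{I}'\| \cdot Diam(X_{I^{k_I}(i_1,\ldots,i_{s_I})}) \;\asymp\; \|\phi_{I}'\|\varphi(n),
\]
where the second $\asymp$ uses \eqref{inclusion}. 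Hence $\mu(\widetilde{X}_{I^{k_I+1}(i_1,\ldots,i_{s_I})}) \asymp \|\phi_{I}'\|^\gamma \varphi(n)^\gamma$ with constants independent of $I$ and $n$.

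To close the calculation, I would write each $I$ with $J$ as a prefix as $I = JI'$ with $I' \in \D^{n-|J|}$. The standard bounded distortion principle for conformal iterated function systems (a consequence of the H\"older continuity assumption on $|\phi_i'|$) gives $\|\phi_{JI'}'\| \asymp \|\phi_{J}'\|\|\phi_{I'}'\|$, so by \eqref{Sum of derivatives}
\[
\sum_{\substack{I \in \D^n\\ J \text{ is a prefix of } I}} \|\phi_{I}'\|^\gamma \;\asymp\; \|\phi_{J}'\|^\gamma \sum_{I' \in \D^{n-|J|}} \|\phi_{I'}'\|^\gamma \;\asymp\; \|\phi_{J}'\|^\gamma.
\]
A further application of \eqref{distance distortion} gives $\|\phi_{J}'\| \asymp Diam(X_J)$, so \eqref{Measure and diameter} together with \eqref{Samemeasure} yields $\|\phi_{J}'\|^\gamma \asymp \mu(X_J)$. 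Chaining these estimates produces $\mu(E_n) \asymp \mu(X_J)\varphi(n)^\gamma$ uniformly in $n$, and summing over $n \in \{|J|,\ldots,Q\}$ delivers the lemma.

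I do not expect a substantial obstacle; the only point requiring care is that the implied constants in every $\asymp$ above must be uniform in $I$ and $n$, which is exactly what the bounded distortion principle guarantees. The other small subtlety is ensuring almost-disjointness, which is handled by working with the sets $\widetilde{X}_{I}$ rather than the raw cylinders $X_{I}$ and invoking \eqref{measure zero intersection}.
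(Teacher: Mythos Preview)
Your proof is correct and follows the same overall strategy as the paper: use \eqref{measure zero intersection} to turn the union into a sum, estimate each summand as $\asymp \varphi(n)^{\gamma}$ times a factor depending on $I$, and then sum that factor over words with prefix $J$ to obtain $\mu(X_J)$. The only cosmetic difference is that the paper carries out the intermediate estimates entirely in terms of measures, using \eqref{Weak Bernoulli measure} to split $\mu(\widetilde{X}_{I^{k_I+1}(i_1,\ldots,i_{s_I})}) \asymp \mu(\widetilde{X}_I)\mu(\widetilde{X}_{I^{k_I}(i_1,\ldots,i_{s_I})})$ and then \eqref{Sum prefixes2} to sum $\mu(\widetilde{X}_I)$ directly to $\mu(\widetilde{X}_J)$, whereas you translate to derivatives via \eqref{distance distortion}, invoke bounded distortion and \eqref{Sum of derivatives}, and translate back; the two toolkits are equivalent here and neither is shorter or more general than the other.
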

\begin{proof}The following holds
\begin{align*}
\sum_{n=|J|}^{Q}\mu(E_n)&=\sum_{n=|J|}^{Q}\mu\left(\bigcup_{\stackrel{I\in \D^n}{J\textrm{ is a prefix of }I}}\widetilde{X}_{I^{k_I+1}(i_1,\ldots,i_{s_I})}\right)\\
&\stackrel{\eqref{measure zero intersection}}{=}\sum_{n=|J|}^{Q}\sum_{\stackrel{I\in \D^n}{J\textrm{ is a prefix of }I}}\mu(\widetilde{X}_{I^{k_I+1}(i_1,\ldots,i_{s_I})})\\
	&\stackrel{\eqref{Weak Bernoulli measure}}{\asymp}\sum_{n=|J|}^{Q}\sum_{\stackrel{I\in \D^n}{J\textrm{ is a prefix of }I}}\mu(\widetilde{X}_I)\mu(\widetilde{X}_{I^{k_I}(i_1,\ldots,i_{s_I})})\\
	&\stackrel{\eqref{Measure and diameter},\eqref{inclusion}}{\asymp}\sum_{n=|J|}^{Q}\varphi(n)^{\gamma}\sum_{\stackrel{I\in \D^n}{J\textrm{ is a prefix of }I}}\mu(\widetilde{X}_I)\\
	&\stackrel{\eqref{Sum prefixes2}}{=} \mu(\widetilde{X}_J)\sum_{n=|J|}^{Q}\varphi(n)^{\gamma}\\
	&\stackrel{\eqref{Samemeasure}}{=} \mu(X_J)\sum_{n=|J|}^{Q}\varphi(n)^{\gamma}.
\end{align*} 
\end{proof}
Lemma \ref{Divergence lemma} shows that when $\sum_{n=1}^{\infty}\varphi(n)^{\gamma}=\infty$ the sequence $(E_n)$ satisfies the hypothesis of Lemma \ref{Erdos lemma}. It will also be used in some of our later calculations. 

\begin{lemma}
	\label{Near independence}
	Let $I\in \D^n$ be such that $J$ is a prefix of $I.$ Then for $m>n$ we have 
	\begin{align*}
	\mu(\widetilde{X}_{I^{k_{I}+1}(i_1,\ldots,i_{s_I})}\cap E_m)&\preceq \mu(\widetilde{X}_{J})\mu(\widetilde{X}_{i_{|J|+1},\ldots,i_n})\kappa^{m-n}\varphi(m)^{\gamma}\\
	&+\mu(\widetilde{X}_{J})\mu(\widetilde{X}_{i_{|J|+1},\ldots,i_n}))\varphi(m)^{\gamma}\varphi(n)^{\gamma}.
	\end{align*}
\end{lemma}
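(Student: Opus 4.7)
The plan is to bound $\mu(\widetilde{X}_{I^{k_{I}+1}(i_1,\ldots,i_{s_I})}\cap E_m)$ by decomposing $E_m$ according to which word $I'\in\D^m$ (with $J$ as a prefix) contributes a subcylinder $\widetilde{X}_{(I')^{k_{I'}+1}(i'_1,\ldots,i'_{s_{I'}})}$. Writing $N := (k_I+1)n + s_I$ for the length of the word defining the fixed cylinder, the natural split is between a short regime $m \leq N$, in which the first $m$ coding symbols of every point in the fixed cylinder are already determined, and a long regime $m > N$, in which $I'$ is free after a forced prefix. These two regimes will produce, respectively, the two terms on the right-hand side of the lemma.

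In the short regime only a single $I'$ can contribute, namely the length-$m$ prefix of $I^{k_I+1}(i_1,\ldots,i_{s_I})$. Since $m>n$, this word has $I$ as a prefix, so writing $I' = II''$ with $|I''|=m-n$, \eqref{Weak Bernoulli measure} and \eqref{measure decay} give $\mu(\widetilde{X}_{I'}) \preceq \mu(\widetilde{X}_I)\kappa^{m-n}$. Combining this with the estimate $\mu(\widetilde{X}_{(I')^{k_{I'}+1}(i'_1,\ldots,i'_{s_{I'}})}) \asymp \mu(\widetilde{X}_{I'})\varphi(m)^\gamma$ (the same chain of \eqref{Weak Bernoulli measure}, \eqref{Measure and diameter}, \eqref{inclusion} used in the proof of Lemma \ref{Divergence lemma}, now applied at level $m$) and the trivial bound $\mu(A\cap B)\leq \mu(B)$, the short-regime contribution is $\preceq \mu(\widetilde{X}_I)\kappa^{m-n}\varphi(m)^\gamma$. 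Applying \eqref{Weak Bernoulli measure} once more to factor $\mu(\widetilde{X}_I) \asymp \mu(\widetilde{X}_J)\mu(\widetilde{X}_{i_{|J|+1},\ldots,i_n})$ then yields the first term of the lemma.

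In the long regime, every admissible $I'$ must start with the whole word $I^{k_I+1}(i_1,\ldots,i_{s_I})$, so $\widetilde{X}_{(I')^{k_{I'}+1}(i'_1,\ldots,i'_{s_{I'}})} \subseteq \widetilde{X}_{I'} \subseteq \widetilde{X}_{I^{k_I+1}(i_1,\ldots,i_{s_I})}$ and the intersection is simply $\widetilde{X}_{(I')^{k_{I'}+1}(i'_1,\ldots,i'_{s_{I'}})}$. Summing $\mu(\widetilde{X}_{(I')^{k_{I'}+1}(i'_1,\ldots,i'_{s_{I'}})}) \asymp \mu(\widetilde{X}_{I'})\varphi(m)^\gamma$ over the admissible $I'$ and invoking \eqref{Sum prefixes2} with the word $I^{k_I+1}(i_1,\ldots,i_{s_I})$ gives a contribution $\asymp \varphi(m)^\gamma \mu(\widetilde{X}_{I^{k_I+1}(i_1,\ldots,i_{s_I})}) \asymp \mu(\widetilde{X}_I)\varphi(n)^\gamma\varphi(m)^\gamma$, which after \eqref{Weak Bernoulli measure} becomes the second term. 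The main obstacle is the short regime: a careless attempt to use the full measure of the fixed cylinder would only yield the weak bound $\mu(\widetilde{X}_I)\varphi(n)^\gamma$, so it is essential to exploit the fact that the forced $I'$ extends $I$ by exactly $m-n$ new symbols, which via \eqref{measure decay} provides the required $\kappa^{m-n}$ decay; the remaining $\varphi(m)^\gamma$ factor is then harvested by passing from $\widetilde{X}_{I'}$ to its $E_m$-subcylinder.
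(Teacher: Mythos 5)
Your proposal is correct and follows essentially the same route as the paper: the same case split at $(k_I+1)n+s_I$, the same identification of the unique contributing length-$m$ word in the short regime (with the $\kappa^{m-n}$ decay extracted from \eqref{measure decay} applied to the $m-n$ new symbols), and the same prefix-sum via \eqref{Sum prefixes2} in the long regime.
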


\begin{proof}
Fix $I\in D^n$ such that $J$ is a prefix of $I$ and let $m>n$. It is useful to consider two separate cases. We first consider the case where $m\leq (k_{I}+1)n+s_{I}.$  

If $m\leq (k_{I}+1)n+s_{I}$ then there exists at most one $\tilde{I}\in \D^m$ such that $$\mu(\widetilde{X}_{I^{k_I+1}(i_1,\ldots,i_{s_I})}\cap \widetilde{X}_{\tilde{I}^{k_{\tilde{I}}+1}(\tilde{i}_1,\ldots,\tilde{i}_{s_{\tilde{I}}})})>0.$$ Therefore 
\begin{align*}
\mu(\widetilde{X}_{I^{k_I+1}(i_1,\ldots,i_{s_I})}\cap E_m)&=\mu(\widetilde{X}_{I^{k_I+1}(i_1,\ldots,i_{s_I})}\cap \widetilde{X}_{\tilde{I}^{k_{\tilde{I}}+1}(\tilde{i}_1,\ldots,\tilde{i}_{s_{\tilde{I}}})})\\
&\leq \mu( \widetilde{X}_{\tilde{I}^{k_{\tilde{I}}+1}(\tilde{i}_1,\ldots,\tilde{i}_{s_{\tilde{I}}})})\\
&\stackrel{\eqref{Weak Bernoulli measure}}{\asymp} \mu(\widetilde{X}_{\tilde{I}})\mu( \widetilde{X}_{\tilde{I}^{k_{\tilde{I}}}(\tilde{i}_1,\ldots,\tilde{i}_{s_{\tilde{I}}})})\\
&\stackrel{\eqref{Weak Bernoulli measure}}{\asymp} \mu(\widetilde{X}_{J})\mu(\widetilde{X}_{\tilde{i}_{|J|+1},\ldots,\tilde{i}_m})\mu( \widetilde{X}_{\tilde{I}^{k_{\tilde{I}}}(\tilde{i}_1,\ldots,\tilde{i}_{s_{\tilde{I}}})})\\
&\stackrel{\eqref{Weak Bernoulli measure}}{\asymp}\mu(\widetilde{X}_{J})\mu(\widetilde{X}_{i_{|J|+1},\ldots,i_n})\mu(\widetilde{X}_{\tilde{i}_{n+1},\ldots,\tilde{i}_m})\mu( \widetilde{X}_{\tilde{I}^{k_{\tilde{I}}}(\tilde{i}_1,\ldots,\tilde{i}_{s_{\tilde{I}}})})\\
&\stackrel{\eqref{measure decay}}{\preceq}\mu(\widetilde{X}_{J})\mu(\widetilde{X}_{i_{|J|+1},\ldots,i_n})\kappa^{m-n}\mu( \widetilde{X}_{\tilde{I}^{k_{\tilde{I}}}(\tilde{i}_1,\ldots,\tilde{i}_{s_{\tilde{I}}})})\\
&\stackrel{\eqref{Measure and diameter},\eqref{inclusion}}{\asymp} \mu(\widetilde{X}_{J})\mu(\widetilde{X}_{i_{|J|+1},\ldots,i_n})\kappa^{m-n}\varphi(m)^{\gamma}.
\end{align*}
Summarising the above, we have shown that if $n<m\leq (k_{I}+1)n+s_{I}$ then 
\begin{equation}
\label{Bound1}
\mu(\widetilde{X}_{I^{k_I+1}(i_1,\ldots,i_{s_I})}\cap E_m)\preceq \mu(\widetilde{X}_{J})\mu(\widetilde{X}_{i_{|J|+1},\ldots,i_n})\kappa^{m-n}\varphi(m)^{\gamma}.
\end{equation} Now suppose $m>(k_I+1)n+s_I.$ Then 
\begin{align*}
\mu(\widetilde{X}_{I^{k_I+1}(i_1,\ldots,i_{s_I})}\cap E_m)&=\sum_{\stackrel{\tilde{I}\in \D^m}{I^{k_I+1}(i_1,\ldots,i_{s_{I}})\textrm{ is a prefix of }\tilde{I}}}\mu(\widetilde{X}_{\tilde{I}^{k+1}(\tilde{i}_1,\ldots,\tilde{i}_{s_{\tilde{I}}})})\\
&\stackrel{\eqref{Weak Bernoulli measure}}{\asymp} \sum_{\stackrel{\tilde{I}\in \D^m}{I^{k_I+1}(i_1,\ldots,i_{s_{I}})\textrm{ is a prefix of }\tilde{I}}}\mu(\widetilde{X}_{\tilde{I}})\mu(\widetilde{X}_{\tilde{I}^{k}(\tilde{i}_1,\ldots,\tilde{i}_{s_{\tilde{I}}})})\\
&\stackrel{\eqref{Measure and diameter},\eqref{inclusion}}{\asymp} \sum_{\stackrel{\tilde{I}\in \D^m}{I^{k_I+1}(i_1,\ldots,i_{s_{I}})\textrm{ is a prefix of }\tilde{I}}}\mu(\widetilde{X}_{\tilde{I}})\varphi(m)^{\gamma}\\
&\stackrel{\eqref{Sum prefixes2}}{=}\mu(\widetilde{X}_{I^{k_I+1}(i_1,\ldots,i_{s_{I}})})\varphi(m)^{\gamma}\\
&\stackrel{\eqref{Weak Bernoulli measure}}{\asymp} \mu(\widetilde{X}_{I})\mu(\widetilde{X}_{I^{k_I}(i_1,\ldots,i_{s_{I}})})\varphi(m)^{\gamma}\\
&\stackrel{\eqref{Weak Bernoulli measure}}{\asymp} \mu(\widetilde{X}_{J})\mu(\widetilde{X}_{i_{|J|+1},\ldots,i_n})\mu(\widetilde{X}_{I^{k_I}(i_1,\ldots,i_{s_{I}})})\varphi(m)^{\gamma}\\
&\stackrel{\eqref{Measure and diameter},\eqref{inclusion}}{\asymp} \mu(\widetilde{X}_{J})\mu(\widetilde{X}_{i_{|J|+1},\ldots,i_n})\varphi(n)^{\gamma}\varphi(m)^{\gamma}.
\end{align*}Summarising the above, we have shown that if  $m>(k_{I}+1)n+s_{I}$ then 
\begin{equation}
\label{Bound2}
\mu(\widetilde{X}_{I^{k_I+1}(i_1,\ldots,i_{s_I})}\cap E_m)\asymp \mu(\widetilde{X}_{J})\mu(\widetilde{X}_{i_{|J|+1},\ldots,i_n})\varphi(m)^{\gamma}\varphi(n)^{\gamma}.
\end{equation}Adding together the bounds provided by \eqref{Bound1} and \eqref{Bound2}, we see that for any $m>n$ we have 
\begin{align*}
\mu(\widetilde{X}_{I^{k_{I}+1}(i_1,\ldots,i_{s_I})}\cap E_m)&\preceq \mu(\widetilde{X}_{J})\mu(\widetilde{X}_{i_{|J|+1},\ldots,i_n})\kappa^{m-n}\varphi(m)^{\gamma}\\
&+\mu(\widetilde{X}_{J})\mu(\widetilde{X}_{i_{|J|+1},\ldots,i_n}))\varphi(m)^{\gamma}\varphi(n)^{\gamma}.
\end{align*}This completes our proof.

\end{proof}

\begin{proposition}
	\label{Quasi independence prop}
	$$\sum_{n,m=|J|}^{Q}\mu(E_n\cap E_m)\preceq \mu(X_J)\left(\sum_{n=|J|}^{Q}\varphi(n)^{\gamma}+\left(\sum_{n=|J|}^{Q}\varphi(n)^{\gamma}\right)^2\right).$$
\end{proposition}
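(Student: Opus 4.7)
The plan is to split the double sum into the three ranges $n<m$, $n=m$, and $n>m$; the last reduces to the first by the symmetry of the problem (applying Lemma \ref{Near independence} with the roles of $n$ and $m$ swapped). The diagonal is handled directly by the proof of Lemma \ref{Divergence lemma}, which actually yields the termwise estimate $\mu(E_n)\asymp \mu(X_J)\varphi(n)^{\gamma}$, so that $\sum_{n=|J|}^{Q}\mu(E_n\cap E_n)\preceq \mu(X_J)\sum_{n=|J|}^{Q}\varphi(n)^{\gamma}$, which is already absorbed into the target bound.

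For a fixed pair $n<m$ I decompose $E_n$ using \eqref{measure zero intersection} as an essentially disjoint union of the blocks $\widetilde{X}_{I^{k_I+1}(i_1,\ldots,i_{s_I})}$ over $I\in \D^n$ with $J$ as a prefix, so that
$$\mu(E_n\cap E_m)=\sum_{\stackrel{I\in \D^n}{J\textrm{ is a prefix of }I}}\mu(\widetilde{X}_{I^{k_I+1}(i_1,\ldots,i_{s_I})}\cap E_m),$$
and then apply Lemma \ref{Near independence} termwise. This produces two kinds of contributions: one carrying the geometric decay factor $\kappa^{m-n}\varphi(m)^{\gamma}$, and one carrying the product $\varphi(n)^{\gamma}\varphi(m)^{\gamma}$, both pre-multiplied by $\mu(\widetilde{X}_J)\mu(\widetilde{X}_{i_{|J|+1},\ldots,i_n})$. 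Next I sum over such $I$: the tails $(i_{|J|+1},\ldots,i_n)$ range over all of $\D^{n-|J|}$, so \eqref{Sum prefixes} gives $\sum_I \mu(\widetilde{X}_{i_{|J|+1},\ldots,i_n})=\mu(X)\asymp 1$, and together with $\mu(\widetilde{X}_J)\asymp \mu(X_J)$ this leaves $\mu(E_n\cap E_m)\preceq \mu(X_J)\bigl(\kappa^{m-n}\varphi(m)^{\gamma}+\varphi(n)^{\gamma}\varphi(m)^{\gamma}\bigr)$ for every pair $n<m$.

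Finally I sum over $|J|\le n<m\le Q$. The product term immediately contributes at most $\mu(X_J)\bigl(\sum_{n=|J|}^{Q}\varphi(n)^{\gamma}\bigr)^{2}$. For the decay term I swap the order of summation and use $\sum_{n=|J|}^{m-1}\kappa^{m-n}\le \kappa/(1-\kappa)=O(1)$ for each fixed $m$, which collapses the remaining sum to $\mu(X_J)\sum_{m=|J|}^{Q}\varphi(m)^{\gamma}$. Combining the three contributions (diagonal, product term, decay term) yields the claimed estimate. The main obstacle, as far as I can see, is really just this last rearrangement of $\kappa^{m-n}$ into a geometric series in $m-n$: without reversing the order of summation here one would pick up an unwanted factor polynomial in $Q$. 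The rest of the argument is a systematic collation of the estimates already at hand.
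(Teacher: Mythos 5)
Your proposal is correct and follows essentially the same route as the paper's proof: the same diagonal/off-diagonal split with symmetry, the same termwise application of Lemma \ref{Near independence} over the blocks of $E_n$, the collapse of the sum over tails via \eqref{Sum prefixes}, and the same interchange of summation to control the $\kappa^{m-n}$ term by a geometric series. No gaps.
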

\begin{proof}
We have the following
\begin{align*}
\sum_{n,m=|J|}^{Q}\mu(E_n\cap E_m)&=\sum_{n=|J|}^{Q}\mu(E_n)+2\sum_{n=|J|}^{Q-1}\sum_{m=n+1}^{Q}\mu(E_n\cap E_m)\\
&=\sum_{n=|J|}^{Q}\mu(E_n)+2\sum_{n=|J|}^{Q-1}\sum_{\stackrel{I\in \D^n}{J\textrm{ is a prefix of }I}}\sum_{m=n+1}^{Q}\mu(\widetilde{X}_{I^{k_{I}+1}(i_1,\ldots,i_{s_{I}})}\cap E_m).
\end{align*} Applying Lemma \eqref{Near independence} to the above we obtain
\begin{align}
\label{Triple split}
\sum_{n,m=|J|}^{Q}\mu(E_n\cap E_m)&\preceq\sum_{n=|J|}^{Q}\mu(E_n)+\sum_{n=|J|}^{Q-1}\sum_{\stackrel{I\in \D^n}{J\textrm{ is a prefix of }I}}\sum_{m=n+1}^{Q}\mu(\widetilde{X}_{J})\mu(\widetilde{X}_{i_{|J|+1},\ldots,i_n})\kappa^{m-n}\varphi(m)^{\gamma}\\
&+\sum_{n=|J|}^{Q-1}\sum_{\stackrel{I\in \D^n}{J\textrm{ is a prefix of }I}}\sum_{m=n+1}^{Q}\mu(\widetilde{X}_{J})\mu(\widetilde{X}_{i_{|J|+1},\ldots,i_n}))\varphi(m)^{\gamma}\varphi(n)^{\gamma}\nonumber.
\end{align}We focus on the three terms on the right hand side of \eqref{Triple split} individually. By Lemma \ref{Divergence lemma} we know that the following holds for the first term
\begin{equation}
\label{BOUND1}
\sum_{n=|J|}^{Q}\mu(E_n)\asymp \mu(X_{J})\sum_{n=|J|}^{Q}\varphi(n)^{\gamma}.
\end{equation}Now let us focus on the second term in \eqref{Triple split}. We have 
\begin{align}
\label{BOUND2}
&\sum_{n=|J|}^{Q-1}\sum_{\stackrel{I\in \D^n}{J\textrm{ is a prefix of }I}}\sum_{m=n+1}^{Q}\mu(\widetilde{X}_{J})\mu(\widetilde{X}_{i_{|J|+1},\ldots,i_n})\kappa^{m-n}\varphi(m)^{\gamma}\nonumber\\
= &\mu(\widetilde{X}_{J})\sum_{n=|J|}^{Q-1}\sum_{\stackrel{I\in \D^n}{J\textrm{ is a prefix of }I}}\mu(\widetilde{X}_{i_{|J|+1},\ldots,i_n})\sum_{m=n+1}^{Q}\kappa^{m-n}\varphi(m)^{\gamma}\nonumber\\
= &\mu(\widetilde{X}_{J})\sum_{n=|J|}^{Q-1}\sum_{I'\in \D^{n-|J|}}\mu(\widetilde{X}_{I'})\sum_{m=n+1}^{Q}\kappa^{m-n}\varphi(m)^{\gamma}\nonumber\\
\stackrel{\eqref{Sum prefixes}}{\preceq}&\mu(\widetilde{X}_{J})\sum_{n=|J|}^{Q-1}\sum_{m=n+1}^{Q}\kappa^{m-n}\varphi(m)^{\gamma}\nonumber\\
=&\mu(\widetilde{X}_{J})\sum_{m=|J|+1}^Q\sum_{n=|J|}^{m-1}\kappa^{m-n}\varphi(m)^{\gamma}\nonumber\\
\stackrel{(\kappa<1)}{\preceq}&\mu(\widetilde{X}_{J})\sum_{m=|J|+1}^Q\varphi(m)^{\gamma}\nonumber\\
\stackrel{\eqref{Samemeasure}}{=}&\mu(X_{J})\sum_{m=|J|+1}^Q\varphi(m)^{\gamma}.
\end{align}It remains to bound the third term:
\begin{align}
\label{BOUND3}
&\sum_{n=|J|}^{Q-1}\sum_{\stackrel{I\in \D^n}{J\textrm{ is a prefix of }I}}\sum_{m=n+1}^{Q}\mu(\widetilde{X}_{J})\mu(\widetilde{X}_{i_{|J|+1},\ldots,i_n}))\varphi(m)^{\gamma}\varphi(n)^{\gamma}\nonumber\\
=&\mu(\widetilde{X}_J)\sum_{n=|J|}^{Q-1}\varphi(n)^{\gamma}\sum_{\stackrel{I\in \D^n}{J\textrm{ is a prefix of }I}}\mu(\widetilde{X}_{i_{|J|+1},\ldots,i_n}))\sum_{m=n+1}^{Q}\varphi(m)^{\gamma}\nonumber\\
=&\mu(\widetilde{X}_J)\sum_{n=|J|}^{Q-1}\varphi(n)^{\gamma}\sum_{I'\in \D^{n-|J|}}\mu(\widetilde{X}_{I'})\sum_{m=n+1}^{Q}\varphi(m)^{\gamma}\nonumber\\
\stackrel{\eqref{Sum prefixes}}{\preceq}&\mu(\widetilde{X}_J)\sum_{n=|J|}^{Q-1}\varphi(n)^{\gamma}\sum_{m=n+1}^{Q}\varphi(m)^{\gamma}\nonumber\\
\stackrel{\eqref{Samemeasure}}{=}&\mu(X_J)\sum_{n=|J|}^{Q-1}\varphi(n)^{\gamma}\sum_{m=n+1}^{Q}\varphi(m)^{\gamma}\nonumber\\
\preceq&\mu(X_J)\left(\sum_{n=|J|}^{Q}\varphi(n)^{\gamma}\right)^2.
\end{align}
Collecting the bounds provided by \eqref{BOUND1}, \eqref{BOUND2}, and \eqref{BOUND3}, and substituting them into \eqref{Triple split} completes our proof. 
\end{proof}

With Proposition \ref{Quasi independence prop} we can now complete our proof of Theorem \ref{Main theorem}.

\begin{proof}[Proof of Theorem \ref{Main theorem}.2]
By \eqref{Sufficestoshow} to prove the divergence part of Theorem \ref{Main theorem} it suffices to show that there exists $c>0$ such that 
\begin{equation}
\label{Proofsufficestoshow}
\mu(\limsup_{n\to\infty} E_n)>c\mu(X_J).
\end{equation} 
By Lemma \ref{Divergence lemma} we know that $\sum_{n=|J|}^{\infty}\mu(E_n)=\infty$. Therefore Lemma \ref{Erdos lemma} and Lemma \ref{Divergence lemma} combined tell us that 
\begin{equation}
\label{nearlydone}\mu(\limsup_{n\to\infty} E_n)\geq \limsup_{Q\to\infty}\frac{\left(\sum_{n=|J|}^{Q}\mu(E_n)\right)^2}{\sum_{n,m=|J|}^{Q}\mu(E_n\cap E_m)}\succeq \limsup_{Q\to\infty}\frac{\mu(\widetilde{X}_J)^2\left(\sum_{n=|J|}^{Q}\varphi(n)^{\gamma}\right)^2}{\sum_{n,m=|J|}^{Q}\mu(E_n\cap E_m)}.
\end{equation}Since $\sum_{n=1}^{\infty}\varphi(n)^{\gamma}=\infty$, we know that for any $Q$ sufficiently large we have 
\begin{equation*}
\label{Square dominates}
\sum_{n=|J|}^{Q}\varphi(n)^{\gamma}<\left(\sum_{n=|J|}^{Q}\varphi(n)^{\gamma}\right)^2.
\end{equation*}
Combining this observation with \eqref{nearlydone} and Proposition \ref{Quasi independence prop}, it follows that $$\mu(\limsup_{n\to\infty} E_n)\succeq \mu(X_J),$$ i.e. there exists $c>0$ such that $\mu(\limsup_{n\to\infty} E_n)>c\mu(X_J).$ So we have shown that \eqref{Proofsufficestoshow} holds. This completes our proof.
\end{proof}

\noindent \textbf{Acknowledgements.} This second author was supported by the University of Warwick Undergraduate Research Support Scheme. The authors would like to thank the anonymous referee whose comments allowed us to strengthen Theorem \ref{Main theorem}.

\end{document}